\providecommand{\U}[1]{\protect\rule{.1in}{.1in}}
\newtheorem{theorem}{Theorem}[section]
\newtheorem{proposition}[theorem]{Proposition}
\newtheorem{lemma}[theorem]{Lemma}
\newtheorem{corollary}[theorem]{Corollary}
\theoremstyle{definition}
\begin{document}
\title{Relations of the Nuclear Norms of a Tensor and its Matrix Flattenings}
\author{Shenglong~Hu}\thanks{This work is partially supported by National Science Foundation of China (Grant No. 11401428). }
\address{Department of Mathematics, School of Science, Tianjin University, Tianjin, China.}
\email{timhu@tju.edu.cn}

\address{Department of Mathematics, National University of Singapore}
\email{mathush@nus.edu.sg}

\begin{abstract}
For a $3$-tensor of dimensions $I_1\times I_2\times I_3$, we show that the nuclear norm of its every matrix flattening is a lower bound of the tensor nuclear norm, and which in turn is upper bounded by $\sqrt{\min\{I_i : i\neq j\}}$ times the nuclear norm of the matrix flattening in mode $j$ for all $j=1,2,3$. The results can be generalized to $N$-tensors with any $N\geq 3$. Both the lower and upper bounds for the tensor nuclear norm are sharp in the case $N=3$. A computable criterion for the lower bound being tight is given as well.
\end{abstract}
\keywords{tensor, nuclear norm}
\subjclass[2010]{15A60; 15A69}
\maketitle

\section{Introduction}\label{sec:intro}
The fundamental significancy of the matrix nuclear norm is commonly admitted, in both theory and applications, especially in matrix completion problems, see \cite{horn-johnson,golub-vanloan,candes-recht} and references therein. 
Likewise, the tensor nuclear norm has been recognized to be of great interesting and importance very recently \cite{friedland-lim,yuan-zhang,derksen,lim13}.  

Though much effort in developing theory and tools for handling tensors in recent years, compared with those for matrices, they are still in infancy \cite{landsberg,lim13}. As a result,  
in the very important problem of tensor completion, nuclear norms of the matrix flattenings of the underlying tensor are popularly used as alternatives of the less explored tensor nuclear norm \cite{gandy-recht-yamada}. However, in very recently, it is shown that the usage of tensor nuclear norm can gain drastically smaller sample size to guarantee exact recovery of lower rank tensors in large dimensions \cite{yuan-zhang}. Therefore, it would be of interesting to know some relationships between the two approaches of the tensor completion problem. Especially, the practical powerful approach through matrix flattenings suggests that there should be closely related relationships between the tensor nuclear norm, which possesses stronger theoretical recovery property, and its matrix flattening nuclear norms. 

On the other side, it is shown also very recently that the computation of the tensor nuclear norm is NP-hard \cite{friedland-lim}, which implies that the tensor completion problem based on tensor nuclear norm is also NP-hard. So, it is interesting to get approximations of the tensor nuclear norm with known worst case bounds. From the approaches of tensor completion problem, the nuclear norms of the matrix flattenings would be the first choices.  

This article establishes some relationships between them. Thus, it provides a rationale for the usage of nuclear norms of the matrix flattenings in tensor completion from a new perspective, and also computable bounds for the NP-hard tensor nuclear norm. 

We will first focus on third order tensors (or $3$-tensors) in Sections~\ref{sec:nuclear}, \ref{sec:flatten} and \ref{sec:bound}, and then extend the results to tensors of higher orders in the last section (cf.\ Section~\ref{sec:general}). 

\section{Tensor Nuclear Norm}\label{sec:nuclear}
Let $\mathbb R^{I\times J\times K}$ be space of third order tensors (or $3$-tensors) of dimensions $I\times J\times K$ with entries in the field of real numbers. A tensor $\mathcal A\in\mathbb R^{I\times J\times K}$ consists of $IJK$ entries $a_{ijk}$ with $i\in\{1,\dots,I\}$, $j\in\{1,\dots,J\}$ and $k\in\{1,\dots,K\}$. 
Associated with the tensor space $\mathbb R^{I\times J\times K}$ are the natural inner product:
\[
\langle\mathcal A,\mathcal B\rangle:=\sum_{1\leq i\leq I}\sum_{1\leq j\leq J}\sum_{1\leq k\leq K}a_{ijk} b_{ijk} ,
\]
and the induced norm
\[
\|\mathcal A\|_{\operatorname{HS}}:=\sqrt{\langle\mathcal A,\mathcal A\rangle},
\]
which is referred as the Hilbert-Schmidt norm in the literature \cite{lim13}. Note that when $\mathcal A$ degenerates (i.e., $\min\{I,J,K\}=1$), the Hilbert-Schmidt norm reduces to the Frobenius norm of a matrix or the Euclidean norm of a vector. 

Tensors are generalizations of matrices. Two of the most important norms of a matrix are the spectral norm and its dual (i.e., the nuclear norm) \cite{horn-johnson, golub-vanloan}. Likewise, we can define the \textit{spectral norm of a tensor} $\mathcal A\in\mathbb R^{I\times J\times K}$ as
\begin{equation}\label{sepctralnorm}
\|\mathcal A\|:=\max\big\{\langle\mathcal A,\mathbf x\otimes\mathbf y\otimes\mathbf z\rangle : 
\mathbf x\in\mathbb R^I,\ \mathbf y\in\mathbb R^J,\ \mathbf z\in\mathbb R^K,\ 
\|\mathbf x\|=\|\mathbf y\|=\|\mathbf z\|=1\big\}.
\end{equation}
Here $\mathbf x\otimes\mathbf y\otimes\mathbf z\in\mathbb R^{I\times J\times K}$ is a rank one tensor with its $ijk$th entry being $x_iy_jz_k$.
Conveniently, we use the same $\|\cdot\|$ to mean the spectral norm of a tensor (which is denoted in calligraphic letter) and a matrix (which is denoted in capital letter), and the Euclidean norm of a vector (which is denoted in bold lower case letter). 

It is clear that $\|\cdot\|$ defines a norm over $\mathbb R^{I\times J\times K}$. The dual norm of $\|\cdot\|$ is defined as
\begin{equation}\label{nuclearnorm}
\|\mathcal A\|_\ast:=\max\big\{\langle\mathcal A,\mathcal B\rangle : \mathcal B\in\mathbb R^{I\times J\times K},\ \|\mathcal B\|=1\big\}.
\end{equation}
It can be proved that $\|\cdot\|_\ast$ is also a norm over $\mathbb R^{I\times J\times K}$. From the definitions, we see that the spectral norm and its dual norm of a tensor are generalizations of the spectral norm and the nuclear norm of a matrix respectively. 
We call $\|\mathcal A\|_\ast$ the \textit{nuclear norm} of the tensor $\mathcal A$. 

It is a fact that every tensor $\mathcal A\in\mathbb R^{I\times J\times K}$ can be decomposed into a sum of rank one tensors \cite{lim13,landsberg}:
\[
\mathcal A=\sum_{s=1}^r\lambda_s\mathbf x_s\otimes\mathbf y_s\otimes\mathbf z_s,
\]
with $\lambda_s\in\mathbb R$, and unit vectors $\mathbf x_s\in\mathbb R^I$, $\mathbf y_s\in\mathbb R^J$, and $\mathbf z_s\in\mathbb R^K$. 
It can be shown that \cite{lim13,friedland-lim}
\begin{equation}\label{decomp}
\|\mathcal A\|_\ast=\min\bigg\{\sum_{s=1}^r|\lambda_s| : \mathcal A=\sum_{s=1}^r\lambda_s\mathbf x_s\otimes\mathbf y_s\otimes\mathbf z_s, \|\mathbf x_s\|=\|\mathbf y_s\|=\|\mathbf z_s\|=1,\ s=1,\dots,r\bigg\}. 
\end{equation}
Note that the matrix nuclear norm has a similar characterization, i.e., the singular value decomposition. 

It is well-known that both the spectral norm and the nuclear norm of a matrix can be computed out very efficiently, in polynomial time complexity up to the machine accuracy \cite{golub-vanloan}. However, both the spectral norm and the nuclear norm of a tensor are successively proven to be NP-hard to compute \cite{hillar-lim,friedland-lim}. Despite the general NP-hardness, the nuclear norms of some special tensors can be determined, see \cite{lim13,friedland-lim,derksen}. 

\section{Matrix Flattening}\label{sec:flatten}
Given a $3$-tensor $\mathcal A\in\mathbb R^{I\times J\times K}$, we can regard it as a collection of $I$-vectors ($J$-vectors, $K$-vectors) $\mathbf a_{\cdot jk}$'s ($\mathbf a_{i\cdot k}$'s, $\mathbf a_{ij\cdot}$'s respectively), where for example 
\[
\mathbf a_{\cdot jk}=(a_{1jk},\dots,a_{Ijk})^\mathsf{T}\in\mathbb R^I.
\]

Let us focus on the $I$-vectors for a moment. There are altogether $JK$ $I$-vectors, and they are denoted by $\mathbf a_{\cdot jk}$ for $j=1,\dots,J$ and $k=1,\dots,K$. If we list all of them into a $I\times JK$ matrix with respect to a prefixed order of the set $\{(j,k) : 1\leq j\leq J,\ 1\leq k\leq K\}$ (eg.\ lexicographic order) as 
\[
A_{(1)}:=\big[\mathbf a_{\cdot 11},\dots,\mathbf a_{\cdot JK}\big],
\]
the resulting matrix is called the \textit{matrix flattening of the tensor $\mathcal A$ in mode $1$}. Similarly, we have the matrix flattenings $A_{(2)}$ and $A_{(3)}$ of the tensor $\mathcal A$ in mode $2$ and mode $3$ respectively.

The next lemma is immediate. 
\begin{lemma}[Isomorphism]\label{lem:corres}
For a fixed order of the set $\{(j,k) : 1\leq j\leq J,\ 1\leq k\leq K\}$, there is a one to one correspondence between the space $\mathbb R^{I\times J\times K}$ of $3$-tensors and the space of $\mathbb R^{I\times JK}$ of matrices through the matrix flattening in mode $1$. Similar results hold for mode $2$ and $3$. 
\end{lemma}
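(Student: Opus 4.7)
The plan is to verify that the matrix flattening operation in mode $1$ is a linear bijection, and then to observe that the construction is entirely symmetric in the three modes. Write $\Phi_1 : \mathbb R^{I\times J\times K} \to \mathbb R^{I\times JK}$ for the mode-$1$ flattening map $\mathcal A \mapsto A_{(1)}$. Linearity is immediate from the definition of $A_{(1)}$ in terms of the columns $\mathbf a_{\cdot jk}$, since the scalar entries of $A_{(1)}$ are (up to a fixed permutation of the column index) exactly the entries $a_{ijk}$ of $\mathcal A$.

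Next I would establish bijectivity by a dimension count together with injectivity, which is the quickest route. Both $\mathbb R^{I\times J\times K}$ and $\mathbb R^{I\times JK}$ are real vector spaces of dimension $IJK$. If $\Phi_1(\mathcal A) = 0$, then every column $\mathbf a_{\cdot jk}$ of $A_{(1)}$ vanishes, forcing $a_{ijk} = 0$ for all $i,j,k$, so $\mathcal A = 0$. Hence $\Phi_1$ is an injective linear map between spaces of the same finite dimension, and is therefore an isomorphism. Equivalently, one may exhibit $\Phi_1^{-1}$ explicitly: given $M \in \mathbb R^{I\times JK}$, the prefixed total order on $\{(j,k) : 1\leq j\leq J,\ 1\leq k\leq K\}$ gives a bijection with $\{1,\ldots,JK\}$, so the $\ell$th column of $M$ can be read off as $\mathbf a_{\cdot jk}$ where $(j,k)$ is the index corresponding to $\ell$, and this yields a tensor $\mathcal A$ with $\Phi_1(\mathcal A) = M$.

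For modes $2$ and $3$, the identical argument applies after interchanging the roles of the indices, giving isomorphisms $\Phi_2 : \mathbb R^{I\times J\times K} \to \mathbb R^{J\times IK}$ and $\Phi_3 : \mathbb R^{I\times J\times K} \to \mathbb R^{K\times IJ}$. There is no real obstacle here; the statement is essentially a bookkeeping assertion that reshaping $IJK$ scalars according to a fixed ordering is a reversible operation. The only mild point worth being explicit about is that the fixed ordering of the index set $\{(j,k)\}$ (or its analogues in the other modes) is what makes $\Phi_1$ well-defined as a map into a specific matrix space rather than only up to column permutation.
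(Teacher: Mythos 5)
Your proof is correct; the paper offers no argument at all, simply declaring the lemma immediate, and your linearity-plus-injectivity (or explicit inverse) argument is exactly the routine bookkeeping the paper has in mind. Nothing further is needed.
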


Therefore, we would like to fix an order of the set $\{(j,k) : 1\leq j\leq J,\ 1\leq k\leq K\}$, say the lexicographic order. Then, under the matrix flattening in mode $1$, the unique matrix associated to a tensor $\mathcal A$ is denoted by $A_{(1)}$ as before; while the unique tensor associated to a matrix $A\in\mathbb R^{I\times JK}$ is denoted by $\operatorname{ten_1}(A)$.

\section{Bounds from Matrix Flattening Nuclear Norms}\label{sec:bound}
As the nuclear norm of a tensor is NP-hard to compute, whereas the nuclear norms of matrices are easy to compute in any given accuracy, it becomes popular in applications, such as tensor completion \cite{gandy-recht-yamada}, to use
\begin{equation}\label{norm-av}
\|\mathcal A\|_{\#}:=\frac{1}{3}\big(\|A_{(1)}\|_\ast+\|A_{(2)}\|_\ast+\|A_{(3)}\|_\ast\big)
\end{equation}
or some other functionals over $(\|A_{(1)}\|_\ast,\|A_{(2)}\|_\ast,\|A_{(3)}\|_\ast)^\mathsf{T}$
as alternatives for $\|\mathcal A\|_\ast$.

We first show that every matrix flattening nuclear norm is a lower bound of the tensor nuclear norm. 
\begin{proposition}[Lower Bound]\label{prop:lowerbound}
For any $3$-tensor $\mathcal A\in\mathbb R^{I\times J\times K}$, we have
\[
\|A_{(i)}\|_\ast\leq \|\mathcal A\|_\ast,\ \text{for all }i=1,2,3.
\]
Therefore,
\begin{equation}\label{lowerbound}
\|\mathcal A\|_{\#}\leq\|\mathcal A\|_\ast. 
\end{equation}
\end{proposition}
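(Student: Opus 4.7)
The plan is to use the decomposition characterization \eqref{decomp} of the tensor nuclear norm and translate it into a rank-one decomposition of the matrix flattening. By Lemma~\ref{lem:corres}, flattening is a linear isomorphism, so any decomposition of $\mathcal A$ immediately produces a decomposition of $A_{(i)}$; the content is to check that the resulting matrix terms are rank-one with unit-norm factors, so that they can be estimated by the matrix nuclear norm.

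Concretely, fix $i=1$ (the cases $i=2,3$ are entirely symmetric). I would start from an arbitrary rank-one decomposition $\mathcal A=\sum_{s=1}^r\lambda_s\mathbf x_s\otimes\mathbf y_s\otimes\mathbf z_s$ with unit $\mathbf x_s,\mathbf y_s,\mathbf z_s$, and observe that the mode-$1$ flattening of the rank-one tensor $\mathbf x_s\otimes\mathbf y_s\otimes\mathbf z_s$ is exactly the rank-one matrix $\mathbf x_s(\mathbf y_s\otimes\mathbf z_s)^{\mathsf T}$, where $\mathbf y_s\otimes\mathbf z_s\in\mathbb R^{JK}$ is the Kronecker product (vectorized in the same prefixed order used to define the flattening). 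Since $\|\mathbf y_s\otimes\mathbf z_s\|=\|\mathbf y_s\|\cdot\|\mathbf z_s\|=1$, the factors have unit Euclidean norm. Hence
\[
A_{(1)}=\sum_{s=1}^r\lambda_s\,\mathbf x_s(\mathbf y_s\otimes\mathbf z_s)^{\mathsf T}
\]
is a sum of unit-normed rank-one matrices, and by the matrix analogue of \eqref{decomp} (i.e.\ the SVD characterization of the matrix nuclear norm) we get $\|A_{(1)}\|_\ast\le\sum_{s=1}^r|\lambda_s|$. Taking the infimum over all decompositions of $\mathcal A$ and invoking \eqref{decomp} yields $\|A_{(1)}\|_\ast\le\|\mathcal A\|_\ast$.

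Applying the same argument to modes $2$ and $3$ gives the three inequalities $\|A_{(i)}\|_\ast\le\|\mathcal A\|_\ast$, and averaging them produces \eqref{lowerbound}. I expect no real obstacle here: the only point that needs a careful sentence is the verification that flattening a rank-one tensor produces a rank-one matrix whose second factor is the Kronecker product of the other two unit vectors and therefore has unit norm; once this is in place, everything else is a one-line appeal to the decomposition formula for the nuclear norm in the matrix and tensor settings.
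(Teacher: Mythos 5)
Your proof is correct, but it takes the primal route where the paper takes the dual one. You start from the decomposition characterization \eqref{decomp}: any rank-one expansion $\mathcal A=\sum_s\lambda_s\mathbf x_s\otimes\mathbf y_s\otimes\mathbf z_s$ flattens to a rank-one expansion $A_{(1)}=\sum_s\lambda_s\,\mathbf x_s(\mathbf y_s\otimes\mathbf z_s)^{\mathsf T}$ with unit factors (since $\|\mathbf y_s\otimes\mathbf z_s\|=\|\mathbf y_s\|\,\|\mathbf z_s\|=1$), so $\|A_{(1)}\|_\ast\leq\sum_s|\lambda_s|$, and minimizing over decompositions gives the claim. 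The paper instead works entirely with the definition \eqref{nuclearnorm} of the nuclear norm as the dual of the spectral norm: it observes that, under the flattening isomorphism, the rank-one unit tensors $\mathfrak F$ form a subset of the rank-one unit matrices $\mathfrak M$, hence $\|V\|\geq\|\operatorname{ten_1}(V)\|$ for every matrix $V$, hence the spectral unit ball $\mathfrak E$ of matrices sits inside the spectral unit ball $\mathfrak D$ of tensors, and the inequality of the dual norms follows by maximizing $\langle\mathcal A,\cdot\rangle$ over the smaller versus the larger set. The two arguments hinge on the same elementary fact ($\mathbf y\otimes\mathbf z$ has unit norm, i.e.\ flattened rank-one tensors are unit rank-one matrices), but yours requires the nontrivial characterization \eqref{decomp} (a cited result, including attainment of the minimum, though you could work with $\varepsilon$-optimal decompositions to avoid attainment), whereas the paper's proof needs only the definition of $\|\cdot\|_\ast$ as a dual norm and is in that sense more self-contained. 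What your version buys is a very concrete picture of why the inequality holds---every tensor decomposition is in particular a matrix decomposition---which is exactly the mechanism the paper later exploits in the reverse direction for the upper bound in Proposition~\ref{prop:svd-nuclear}.
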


\begin{proof}
We prove the case $\|A_{(1)}\|_\ast\leq\|\mathcal A\|_\ast$ and the others follow similarly. Moreover, \eqref{lowerbound} is an immediate consequence of these inequalities. 

Let 
\[
\mathfrak F:=\{\mathbf u\otimes\mathbf v\otimes\mathbf w : \mathbf u\in\mathbb R^I,\ \mathbf v\in\mathbb R^J,\ \mathbf w\in\mathbb R^K,\ \|\mathbf u\|=\|\mathbf v\|=\|\mathbf w\|=1\}
\]
be the set of all rank one tensors of unit length. 
Let
\[
\mathfrak D:=\{\mathcal U\in\mathbb R^{I\times J\times K} : \|\mathcal U\|\leq 1\}
\]
be the set of all tensors of length smaller than one. 

Likewise, let
\[
\mathfrak M:=\{\mathbf u\otimes\mathbf z : \mathbf u\in\mathbb R^I,\ \mathbf z\in\mathbb R^{JK},\ \|\mathbf u\|=\|\mathbf z\|=1\}
\]
be the set of all rank one matrices of unit length, and
\[
\mathfrak E:=\{ V\in\mathbb R^{I\times JK} : \|V\|\leq 1\}
\]
be the set of matrices of length smaller than one. 

It is easy to see that under the isomorphism with the matrix flattening in mode $1$ (cf.\ Lemma~\ref{lem:corres}),
\[
\mathfrak F\subset\mathfrak M,
\]
since 
\[
\|\mathbf v\otimes\mathbf w\|=\|\mathbf v\|\|\mathbf w\|.
\]
For a matrix $V\in\mathbb R^{I\times JK}$, we have
\[
\|V\|= \max\{\langle V,\mathbf u\otimes\mathbf z\rangle : \mathbf u\otimes\mathbf z\in\mathfrak M\}. 
\]
Therefore,
\[
\|V\|\geq\max\{\langle \operatorname{ten_1}(V),\mathbf u\otimes\mathbf v\otimes \mathbf w\rangle : \mathbf u\otimes\mathbf v\otimes\mathbf w\in\mathfrak F\}=\|\operatorname{ten_1}(V)\|.
\]
Thus, under the isomorphism with the matrix flattening in mode $1$,
\[
\mathfrak E\subset\mathfrak D.
\]
On the other side, by the definition of nuclear norm, it follows that
\[
\|A_{(1)}\|_\ast=\max\{\langle A_{(1)},V\rangle : V\in\mathfrak E\}. 
\]
Henceforth, these, together with 
\[
\langle A_{(1)},V\rangle=\langle\mathcal A,\operatorname{ten_1}(V)\rangle,
\]
imply that
\[
\|A_{(1)}\|_\ast\leq \max\{\langle\mathcal A,\operatorname{ten_1}(V)\rangle, \operatorname{ten_1}(V)\in\mathfrak D\}=\|\mathcal A\|_\ast. 
\]
\end{proof}

Given a tensor $\mathcal A\in\mathbb R^{I\times J\times K}$, let 
\[
\sum_{i=1}^r\sigma_i\mathbf x_i\otimes \mathbf z_i
\]
be the singular value decomposition of the matrix $A_{(1)}$, see \cite{horn-johnson,golub-vanloan}. Then,
\[
\|A_{(1)}\|_\ast=\sum_{i=1}^r\sigma_i,
\]
and both 
\[
\{\mathbf x_1,\dots,\mathbf x_r\}\ \text{and }\{\mathbf z_1,\dots,\mathbf z_r\}
\]
are orthonormal. 

Under the order of the set $\{(j,k) : 1\leq j\leq J,\ 1\leq k\leq K\}$, we can reformulate the vectors $\mathbf z_i$'s as 
$J\times K$ matrices $Z_i$'s. Then,
\begin{equation}\label{hs-norm}
\|Z_i\|_{\operatorname{HS}}=\|\mathbf z_i\|=1,\ \text{for all }i=1,\dots,r. 
\end{equation}
Define 
\begin{equation}\label{sum-nuclear}
\|\vee Z_i\|_\ast:=\max\{\|Z_i\|_\ast  : 1\leq i\leq r\}.
\end{equation}
Then, we have the following result.
\begin{proposition}[Upper Bound]\label{prop:svd-nuclear}
For any $3$-tensor $\mathcal A\in\mathbb R^{I\times J\times K}$, let $A_{(1)}$ be its matrix flattening in mode $1$ and
\[
\sum_{i=1}^r\sigma_i\mathbf x_i\otimes \mathbf z_i
\]
be the singular value decomposition of $A_{(1)}$. Let $Z_i$ be the matrix reformulation of $\mathbf z_i$ as above, then
\begin{equation}\label{bound-nuclear}
\|\mathcal A\|_\ast\leq \sum_{i=1}^r\sigma_i\|Z_i\|_\ast\leq \|A_{(1)}\|_\ast\|\vee Z_i\|_\ast.
\end{equation}
Similar results hold for matrix flattenings in mode $2$ and mode $3$. 
\end{proposition}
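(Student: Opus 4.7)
The plan is to build an explicit rank-one decomposition of $\mathcal{A}$ by refining the SVD data of $A_{(1)}$, then invoke the infimum formula \eqref{decomp}.

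First I would reinterpret the given SVD $A_{(1)}=\sum_{i=1}^r\sigma_i\,\mathbf{x}_i\otimes\mathbf{z}_i$ at the tensor level. Under the isomorphism of Lemma~\ref{lem:corres}, each rank-one matrix $\mathbf{x}_i\otimes\mathbf{z}_i\in\mathbb{R}^{I\times JK}$ corresponds to the $3$-tensor whose $(p,q,k)$-entry is $(\mathbf{x}_i)_p(Z_i)_{qk}$; denote this tensor by $\mathbf{x}_i\otimes Z_i$. By linearity of $\operatorname{ten_1}$,
\[
\mathcal{A}=\sum_{i=1}^r\sigma_i\,\mathbf{x}_i\otimes Z_i.
\]

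Next I would expand each slice $Z_i$ using its matrix SVD, $Z_i=\sum_j\tau_{ij}\,\mathbf{y}_{ij}\otimes\mathbf{w}_{ij}$, where $\sum_j\tau_{ij}=\|Z_i\|_\ast$ and $\mathbf{y}_{ij}\in\mathbb{R}^J$, $\mathbf{w}_{ij}\in\mathbb{R}^K$ are unit vectors. Substituting back into the previous display produces a genuine unit-vector rank-one decomposition of the tensor,
\[
\mathcal{A}=\sum_{i,j}\sigma_i\tau_{ij}\,\mathbf{x}_i\otimes\mathbf{y}_{ij}\otimes\mathbf{w}_{ij}.
\]
Applying \eqref{decomp} to this decomposition gives $\|\mathcal{A}\|_\ast\leq\sum_{i,j}\sigma_i\tau_{ij}=\sum_i\sigma_i\|Z_i\|_\ast$, which is the first inequality of \eqref{bound-nuclear}. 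The second inequality is a one-line absorption:
\[
\sum_{i=1}^r\sigma_i\|Z_i\|_\ast\leq\Big(\max_{1\leq i\leq r}\|Z_i\|_\ast\Big)\sum_{i=1}^r\sigma_i=\|\vee Z_i\|_\ast\cdot\|A_{(1)}\|_\ast.
\]
The mode-$2$ and mode-$3$ statements follow by permuting the roles of the three factors.

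The only conceptually delicate point is the first step, namely making precise that matrix flattening carries a rank-one matrix $\mathbf{x}\otimes\operatorname{vec}(Z)$ to the tensor $\mathbf{x}\otimes Z$; this is exactly the content of Lemma~\ref{lem:corres}, so no real obstacle is expected. It is worth noting that orthonormality of the SVD factors is never used in the estimate, only that they are unit vectors; this already suggests that the bound need not be tight in general, which is consistent with the sharpness and tightness discussion promised in the introduction.
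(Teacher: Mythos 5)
Your proposal is correct and follows essentially the same route as the paper: both convert the mode-$1$ SVD into the tensor identity $\mathcal A=\sum_i\sigma_i\,\mathbf x_i\otimes Z_i$, expand each $Z_i$ by its own SVD to obtain a unit-vector rank-one decomposition, and then apply \eqref{decomp} followed by the trivial absorption of $\|Z_i\|_\ast$ into its maximum. No gaps.
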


\begin{proof}
Let 
\[
Z_i=\sum_{j=1}^{r_i}\mu_{i,j}\mathbf v_{i,j}\otimes\mathbf w_{i,j}
\]
be the singular value decomposition of the matrix $Z_i$ for all $i=1,\dots,r$. Then,
\[
\|Z_i\|_\ast=\sum_{j=1}^{r_i}\mu_{i,j}.
\]
Since 
\[
A_{(1)}=\sum_{i=1}^r\sigma_i\mathbf x_i\otimes \mathbf z_i,
\]
under the isomorphism with the matrix flattening in mode $1$ (cf.\ Lemma~\ref{lem:corres}), we have that
\[
\mathcal A=\sum_{i=1}^r\sigma_i\mathbf x_i\otimes(\sum_{j=1}^{r_i}\mu_{i,j}\mathbf v_{i,j}\otimes\mathbf w_{i,j}).
\]
It follows from the characterization \eqref{decomp} that
\[
\|\mathcal A\|_\ast\leq \sum_{i=1}^r\sigma_i\big(\sum_{j=1}^{r_i}\mu_{i,j}\big)=\sum_{i=1}^r\sigma_i\|Z_i\|_\ast\leq \|\vee Z_i\|_\ast (\sum_{i=1}^r\sigma_i)=\|A_{(1)}\|_\ast\|\vee Z_i\|_\ast.
\]
Therefore, the result follows. 
\end{proof}

\begin{corollary}[A Criterion]\label{cor:nuclear}
Let $3$-tensor $\mathcal A\in\mathbb R^{I\times J\times K}$. If all the matrices $Z_i$'s as above have nuclear norm $1$, then
\[
\|\mathcal A\|_\ast=\|A_{(1)}\|_\ast.
\]
In this case, 
\begin{equation}\label{diagonaldec}
\mathcal A=\sum_{i=1}^r\sigma_i\mathbf x_i\otimes\mathbf u_i\otimes\mathbf v_i
\end{equation}
for a set of orthonormal vectors $\{\mathbf x_i\in\mathbb R^I : 1\leq i\leq r\}$, and unit vectors $\{\mathbf u_i\in \mathbb R^J : 1\leq i\leq r\}$ and $\{\mathbf v_i\in\mathbb R^K : 1\leq i\leq r\}$ satisfy
\[
\langle\mathbf u_i,\mathbf u_j\rangle\langle \mathbf v_i,\mathbf v_j\rangle=\delta_{ij}, \text{for all }i,j=1,\dots,r,
\]
where $\delta_{ij}$ is the Kronecker symbol.  Similar results hold for the matrix flattenings in mode $2$ and $3$. 
\end{corollary}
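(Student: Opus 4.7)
The plan is to obtain the norm equality directly from the sandwich inequality already furnished by Proposition~\ref{prop:svd-nuclear}, and then to extract the structural form of $\mathcal A$ from the tightness of this sandwich together with the identity \eqref{hs-norm}.

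First, if every $Z_i$ has nuclear norm $1$, then $\|\vee Z_i\|_\ast=1$ and the chain \eqref{bound-nuclear} collapses to
\[
\|\mathcal A\|_\ast\;\leq\;\sum_{i=1}^r\sigma_i\|Z_i\|_\ast\;=\;\sum_{i=1}^r\sigma_i\;=\;\|A_{(1)}\|_\ast.
\]
Combining this with the reverse inequality $\|A_{(1)}\|_\ast\leq\|\mathcal A\|_\ast$ from Proposition~\ref{prop:lowerbound} gives the asserted equality $\|\mathcal A\|_\ast=\|A_{(1)}\|_\ast$.

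Next I would pin down the structure of each $Z_i$. By \eqref{hs-norm} we have $\|Z_i\|_{\operatorname{HS}}=1$, and by hypothesis $\|Z_i\|_\ast=1$. For any real matrix $M$ with singular values $\mu_1,\dots,\mu_{r_i}$, Cauchy--Schwarz gives $\|M\|_{\operatorname{HS}}=\sqrt{\sum_j\mu_j^2}\leq\sum_j\mu_j=\|M\|_\ast$, with equality iff at most one singular value is nonzero, i.e.\ $M$ has rank at most one. Hence each $Z_i$ is rank one and, since both of its norms equal $1$, may be written $Z_i=\mathbf u_i\otimes\mathbf v_i$ for unit vectors $\mathbf u_i\in\mathbb R^J$ and $\mathbf v_i\in\mathbb R^K$. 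Substituting these rank-one decompositions into the identity $\mathcal A=\sum_{i=1}^r\sigma_i\,\mathbf x_i\otimes(\sum_j\mu_{i,j}\mathbf v_{i,j}\otimes\mathbf w_{i,j})$ used in the proof of Proposition~\ref{prop:svd-nuclear} yields the decomposition \eqref{diagonaldec}.

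Finally, to verify the orthogonality relation $\langle\mathbf u_i,\mathbf u_j\rangle\langle\mathbf v_i,\mathbf v_j\rangle=\delta_{ij}$, I would use the fact that the flattening map is an isometry for the Hilbert--Schmidt/Euclidean inner products: the orthonormality of $\{\mathbf z_1,\dots,\mathbf z_r\}$ from the SVD of $A_{(1)}$ translates into the orthonormality of the corresponding reshaped matrices $\{Z_1,\dots,Z_r\}$ under the Hilbert--Schmidt inner product. Since $Z_i=\mathbf u_i\otimes\mathbf v_i$ and $\langle\mathbf u_i\otimes\mathbf v_i,\mathbf u_j\otimes\mathbf v_j\rangle_{\operatorname{HS}}=\langle\mathbf u_i,\mathbf u_j\rangle\langle\mathbf v_i,\mathbf v_j\rangle$, the claimed relation follows. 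The only subtle point in the whole argument is the equality case in $\|M\|_{\operatorname{HS}}\leq\|M\|_\ast$, which is what forces rank one and thereby produces the rigid form \eqref{diagonaldec}; the remaining pieces are bookkeeping built on the two preceding results.
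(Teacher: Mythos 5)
Your argument is correct and follows essentially the same route as the paper's (much terser) proof: the norm equality from Propositions~\ref{prop:lowerbound} and~\ref{prop:svd-nuclear}, the rank-one structure of each $Z_i$ from $\|Z_i\|_{\operatorname{HS}}=\|Z_i\|_\ast=1$, and the orthogonality relation from the Hilbert--Schmidt orthonormality of the reshaped $Z_i$'s. One trivial quibble: the inequality $\|M\|_{\operatorname{HS}}\leq\|M\|_\ast$ is the $\ell^2\leq\ell^1$ comparison on the singular values rather than Cauchy--Schwarz, but you state the inequality and its equality case correctly, so nothing is affected.
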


\begin{proof}
The first part follows from Propositions~\ref{prop:lowerbound} and \ref{prop:svd-nuclear}. The orthonormality follows from those of $\mathbf x_i$'s and $Z_i$'s. 
The remaining follows from \eqref{hs-norm} and the hypothesis that $\|Z_i\|_\ast=1$, which together imply that $Z_i$'s are all rank one matrices. 
\end{proof}

Corollary~\ref{cor:nuclear} gives a
computable criterion for the equivalence between the tensor nuclear norm and its matrix flattening nuclear norm.
If a tensor has the decomposition \eqref{diagonaldec}, then $\|\mathcal A\|_\ast=\|A_{(1)}\|_\ast$ \cite{yuan-zhang}. 

Proposition~\ref{prop:svd-nuclear} has the merit to measure how far the computed matrix flattening nuclear norm from the true nuclear norm of the tensor. It may happens that 
\[
\|\vee Z_i\|_\ast:=\max\{\|Z_i\|_\ast  : 1\leq i\leq r\}
\]
is much larger than most of the $\|Z_i\|_\ast$'s. Therefore, in practical computation, we can determined the accuracy of the nuclear norm of the tensor by the interval
\[
\bigg[\max\big\{\|A_{(i)}\|_\ast : i=1,2,3\big\},\ \ \min\bigg\{\sum_{i=1}^r\sigma_i\|Z_i\|_\ast, \sum_{j=1}^s\mu_j\|S_j\|_\ast, \sum_{k=1}^t\gamma_k\|T_k\|_\ast\bigg\}\bigg],
\]
where $\sum_{j=1}^s\mu_j\|S_j\|_\ast$ is the upper bound given by the matrix flattening in mode $2$, and $\sum_{k=1}^t\gamma_k\|T_k\|_\ast$ is that for mode $3$. 

We arrive at the main theorem. 
\begin{theorem}[The Relation]\label{thm:bound}
For any $3$-tensor $\mathcal A\in\mathbb R^{I\times J\times K}$, we have
\begin{equation}\label{upperbound}
\|A_{(1)}\|_\ast\leq \|\mathcal A\|_\ast\leq \sqrt{\min\{J,K\}}\|A_{(1)}\|_\ast. 
\end{equation}
Moreover, both the bounds on $\|\mathcal A\|_\ast$ are sharp. 
Similar results hold for matrix flattenings in mode $2$ and mode $3$.
\end{theorem}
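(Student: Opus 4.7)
The lower bound is Proposition~\ref{prop:lowerbound}, so the real work is the upper bound and the sharpness claims. My plan is to deduce the upper bound directly from Proposition~\ref{prop:svd-nuclear} by estimating the quantity $\|\vee Z_i\|_\ast$ uniformly. Since each $Z_i$ is a $J\times K$ matrix satisfying $\|Z_i\|_{\operatorname{HS}}=1$ by \eqref{hs-norm}, and $\operatorname{rank}(Z_i)\leq \min\{J,K\}$, the elementary Cauchy--Schwarz relation between the nuclear norm and the Frobenius norm of a matrix gives
\[
\|Z_i\|_\ast=\sum_{j=1}^{r_i}\mu_{i,j}\leq \sqrt{r_i}\,\Bigl(\sum_{j=1}^{r_i}\mu_{i,j}^2\Bigr)^{1/2}\leq \sqrt{\min\{J,K\}}\,\|Z_i\|_{\operatorname{HS}}=\sqrt{\min\{J,K\}}.
\]
Plugging this into \eqref{bound-nuclear} yields $\|\mathcal A\|_\ast\leq \sqrt{\min\{J,K\}}\,\|A_{(1)}\|_\ast$. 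The arguments for modes~2 and~3 are identical after renaming the factors.

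For sharpness of the lower bound, any nonzero rank-one tensor $\mathcal A=\mathbf x\otimes\mathbf y\otimes\mathbf z$ satisfies $\|\mathcal A\|_\ast=\|\mathbf x\|\|\mathbf y\|\|\mathbf z\|=\|A_{(1)}\|_\ast$, since $A_{(1)}$ is a rank-one matrix, so equality is trivially attained. This also falls out of Corollary~\ref{cor:nuclear} with $r=1$.

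For sharpness of the upper bound, I will exhibit an explicit tensor realizing equality. Set $m=\min\{J,K\}$ and pick a $J\times K$ matrix $Z$ whose nonzero singular values are all equal to $1/\sqrt{m}$ (so $\|Z\|_{\operatorname{HS}}=1$ and $\|Z\|_\ast=\sqrt{m}$). Fix a unit vector $\mathbf x\in\mathbb R^I$ and define $\mathcal A:=\mathbf x\otimes Z$ viewed as a $3$-tensor, i.e., $a_{ijk}=x_i\,Z_{jk}$. Then $A_{(1)}=\mathbf x\otimes\operatorname{vec}(Z)$ is rank one with $\|A_{(1)}\|_\ast=\|\mathbf x\|\|\operatorname{vec}(Z)\|=1$. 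The ``slice'' structure of $\mathcal A$ makes the tensor spectral and nuclear norms coincide with those of $Z$: the upper bound $\|\mathcal A\|_\ast\leq\sqrt{m}$ comes from the SVD of $Z$, while the matching lower bound follows by duality---take a matrix $B$ dual to $Z$ (so $\|B\|=1$ and $\langle Z,B\rangle=\|Z\|_\ast$) and observe that $\mathbf x\otimes B$ has tensor spectral norm $1$ (the optimization in \eqref{sepctralnorm} decouples in the $\mathbf x$-factor) while $\langle \mathcal A,\mathbf x\otimes B\rangle=\|Z\|_\ast=\sqrt{m}$.

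The main obstacle I anticipate is the sharpness argument for the upper bound, specifically the verification that $\|\mathbf x\otimes B\|=\|B\|$ as a $3$-tensor, which requires a clean decoupling argument in the definition \eqref{sepctralnorm}; everything else reduces to Propositions~\ref{prop:lowerbound} and~\ref{prop:svd-nuclear} together with the standard matrix norm inequality $\|M\|_\ast\leq\sqrt{\operatorname{rank}(M)}\,\|M\|_{\operatorname{HS}}$.
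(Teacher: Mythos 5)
Your proof is correct, and for the two-sided inequality it is exactly the paper's argument: the lower bound is Proposition~\ref{prop:lowerbound}, and the upper bound combines Proposition~\ref{prop:svd-nuclear} with the standard estimate $\|M\|_\ast\leq\sqrt{\operatorname{rank}(M)}\,\|M\|_{\operatorname{HS}}$ applied to the $Z_i$ (the paper invokes this fact without the Cauchy--Schwarz computation you supply). Where you genuinely diverge is in the sharpness witnesses, and your choices are arguably stronger. For the lower bound the paper takes the degenerate case $\min\{J,K\}=1$, where the tensor is essentially a matrix; your rank-one example attains equality in every dimension regime, and also follows from Corollary~\ref{cor:nuclear} with $r=1$ as you note. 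For the upper bound the paper sets $I=1$ and takes $\mathcal A=\sum_{i=1}^J\tfrac{1}{\sqrt J}\,1\otimes\mathbf e_{2,i}\otimes\mathbf e_{3,i}$ (implicitly assuming $J\leq K$ so that $\sqrt J=\sqrt{\min\{J,K\}}$), where the tensor nuclear norm collapses to a matrix nuclear norm and no duality argument is needed; your slice tensor $\mathbf x\otimes Z$ works for arbitrary $I$ but then requires the dual certificate $\mathbf x\otimes B$ with $\|\mathbf x\otimes B\|=1$ to pin down $\|\mathcal A\|_\ast$ from below. That decoupling step, which you flag as the main obstacle, is in fact immediate: $\langle\mathbf x\otimes B,\mathbf u\otimes\mathbf v\otimes\mathbf w\rangle=\langle\mathbf x,\mathbf u\rangle\langle B,\mathbf v\otimes\mathbf w\rangle$, the two factors involve disjoint variables and are each bounded by $1$ in absolute value, and both bounds are attained simultaneously at $\mathbf u=\mathbf x$ and the top singular pair of $B$. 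The net effect is that your version establishes sharpness of both bounds for nondegenerate dimensions, which the paper's examples do not.
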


\begin{proof}
The results follow from Propositions~\ref{prop:lowerbound} and \ref{prop:svd-nuclear}, and the fact that a $J\times K$ matrix of the Hilbert-Schmidt norm (or the Frobenius norm) $1$ can have the nuclear norm at most $\sqrt{\min\{J,K\}}$. 

The sharpness of the left hand side inequality follows when $\min\{J,K\}=1$, in which case the tensor is essentially a matrix and henceforth the inequality becomes an equality. 

For the right hand side inequality, let $I=1$ and tensor 
\[
\mathcal A= \sum_{i=1}^J\frac{1}{\sqrt{J}}1\otimes\mathbf e_{2, i}\otimes\mathbf e_{3,i}, 
\]
where $\{\mathbf e_{2,i} : i=1,\dots,J\}$ is the standard orthonormal basis of $\mathbb R^J$ and $\{\mathbf e_{3,i} : i=1,\dots,J\}$ are  $J$ standard basis vectors in $\mathbb R^K$. Then, it follows that
\[
\|A_{(1)}\|=\|\sum_{i=1}^J\frac{1}{\sqrt{J}}\mathbf e_{2, i}\otimes\mathbf e_{3,i}\|_{\operatorname{HS}}=1,
\]
and
\[
\|\mathcal A\|_\ast=\|\sum_{i=1}^J\frac{1}{\sqrt{J}}\mathbf e_{2, i}\otimes\mathbf e_{3,i}\|_\ast=\sqrt{J}. 
\]
Therefore, we have that the inequality becomes an equality. 
\end{proof}

\begin{corollary}\label{cor:av}
Let $I\leq J\leq K$. For any $3$-tensor $\mathcal A\in\mathbb R^{I\times J\times K}$, we have
\begin{equation*}
\|\mathcal A\|_{\#}\leq \|\mathcal A\|_\ast\leq \sqrt{J}\|\mathcal A\|_{\#}. 
\end{equation*}
\end{corollary}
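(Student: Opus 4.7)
The plan is to derive both inequalities in Corollary~\ref{cor:av} directly from the per-mode relation already established in Theorem~\ref{thm:bound} and the lower bound in Proposition~\ref{prop:lowerbound}; essentially, Corollary~\ref{cor:av} is the ``averaged'' consequence of those per-mode bounds once one exploits the ordering $I\leq J\leq K$.

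For the left inequality, I would simply apply Proposition~\ref{prop:lowerbound} to each of the three modes to get $\|A_{(i)}\|_\ast\leq\|\mathcal A\|_\ast$ for $i=1,2,3$, then average the three inequalities to obtain $\|\mathcal A\|_{\#}\leq\|\mathcal A\|_\ast$. No use of the ordering $I\leq J\leq K$ is needed here.

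For the right inequality, the key observation is that, under $I\leq J\leq K$, the quantity $\sqrt{\min\{\text{other two dimensions}\}}$ appearing in Theorem~\ref{thm:bound} never exceeds $\sqrt{J}$: for mode $1$ it equals $\sqrt{\min\{J,K\}}=\sqrt{J}$, while for modes $2$ and $3$ it equals $\sqrt{\min\{I,K\}}=\sqrt{I}$ and $\sqrt{\min\{I,J\}}=\sqrt{I}$ respectively, both of which are $\leq\sqrt{J}$. So Theorem~\ref{thm:bound} gives $\|\mathcal A\|_\ast\leq\sqrt{J}\|A_{(i)}\|_\ast$ for every $i=1,2,3$, equivalently $\|A_{(i)}\|_\ast\geq\|\mathcal A\|_\ast/\sqrt{J}$. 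Averaging the three lower bounds on $\|A_{(i)}\|_\ast$ yields $\|\mathcal A\|_{\#}\geq\|\mathcal A\|_\ast/\sqrt{J}$, which rearranges to the desired $\|\mathcal A\|_\ast\leq\sqrt{J}\|\mathcal A\|_{\#}$.

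There is no real obstacle here; the statement is a straightforward corollary of Theorem~\ref{thm:bound}, and the only conceptual step is noticing that once the modes are ordered so that $J$ is the median dimension, $\sqrt{J}$ uniformly dominates the mode-dependent constants $\sqrt{\min\{\cdot,\cdot\}}$ across all three flattenings, which is precisely what allows one to ``average out'' the per-mode bounds into a single bound involving $\|\mathcal A\|_{\#}$.
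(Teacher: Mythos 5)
Your proposal is correct and is exactly the intended derivation: the paper states Corollary~\ref{cor:av} without proof as an immediate consequence of Proposition~\ref{prop:lowerbound} and Theorem~\ref{thm:bound}, and your argument (averaging the per-mode lower bounds, and noting that under $I\leq J\leq K$ the mode-dependent constants $\sqrt{\min\{J,K\}}$, $\sqrt{\min\{I,K\}}$, $\sqrt{\min\{I,J\}}$ are all at most $\sqrt{J}$) supplies precisely the missing details.
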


\section{Generalization}\label{sec:general}
For any positive integers $N\geq 3$, $I_1\leq\dots\leq I_N$ and an $N$-tensor $\mathcal A\in\mathbb R^{I_1\times\dots\times I_N}$,
we can define in a similar fashion the matrix flattenings in mode $1$ up to mode $N$, and
\[
\|\mathcal A\|_{\#}:=\frac{1}{N}\sum_{i=1}^N\|A_{(i)}\|_\ast. 
\]
All the results in the previous sections can be generalized to tensors of higher orders, except the sharpness result of the upper bound in Theorem~\ref{thm:bound} which is unknown and suspected to be most likely false.

To this end, only the next lemma should be outlined. 
\begin{lemma}\label{lem:general}
For any positive integers $N\geq 3$, $I_1\leq\dots\leq I_N$ and an $N$-tensor $\mathcal A\in\mathbb R^{I_1\times\dots\times I_N}$ with $\|\mathcal A\|_{\operatorname{HS}}=1$, we have
\[
\|\mathcal A\|_\ast \leq \sqrt{\prod_{i=1}^{N-1}I_i}.
\]
\end{lemma}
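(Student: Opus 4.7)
The plan is to prove the lemma by induction on $N$, using the $N$-tensor analogue of Proposition~\ref{prop:svd-nuclear} together with Cauchy--Schwarz to turn the Hilbert--Schmidt normalization into the desired bound. The base case I would use is the trivial matrix inequality $\|A\|_\ast \leq \sqrt{\operatorname{rank}(A)}\,\|A\|_{\operatorname{HS}}$, which gives $\|\mathcal{A}\|_\ast \leq \sqrt{I_1}$ for $N=2$; this covers $\sqrt{\prod_{i=1}^{N-1}I_i}$ when $N-1=1$, and allows the induction to start one step below.

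For the inductive step, assume the bound holds for all $(N-1)$-tensors and let $\mathcal{A}\in\mathbb{R}^{I_1\times\dots\times I_N}$ satisfy $\|\mathcal{A}\|_{\operatorname{HS}}=1$. Form the mode-$1$ matrix flattening $A_{(1)}\in\mathbb{R}^{I_1\times I_2\cdots I_N}$ and write its SVD
\[
A_{(1)}=\sum_{i=1}^{r}\sigma_i\mathbf{x}_i\otimes\mathbf{z}_i,\qquad r\leq I_1,
\]
with orthonormal $\{\mathbf{x}_i\}\subset\mathbb{R}^{I_1}$ and orthonormal $\{\mathbf{z}_i\}\subset\mathbb{R}^{I_2\cdots I_N}$. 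Reshaping each $\mathbf{z}_i$ against the lexicographic order gives an $(N-1)$-tensor $\mathcal{Z}_i\in\mathbb{R}^{I_2\times\dots\times I_N}$ with $\|\mathcal{Z}_i\|_{\operatorname{HS}}=\|\mathbf{z}_i\|=1$, and the Parseval identity yields $\sum_{i=1}^r\sigma_i^2=\|A_{(1)}\|_{\operatorname{HS}}^2=\|\mathcal{A}\|_{\operatorname{HS}}^2=1$.

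Now I would invoke the $N$-tensor version of Proposition~\ref{prop:svd-nuclear}: decomposing each $\mathcal{Z}_i$ into unit rank-one $(N-1)$-tensors witnessing $\|\mathcal{Z}_i\|_\ast$ and combining with the $\mathbf{x}_i$'s produces an $N$-tensor decomposition of $\mathcal{A}$ whose $\ell^1$ weight is $\sum_i\sigma_i\|\mathcal{Z}_i\|_\ast$. The characterization \eqref{decomp} (applied in the $N$-tensor setting) therefore gives $\|\mathcal{A}\|_\ast\leq\sum_{i=1}^r\sigma_i\|\mathcal{Z}_i\|_\ast$. By the induction hypothesis, $\|\mathcal{Z}_i\|_\ast\leq\sqrt{\prod_{k=2}^{N-1}I_k}$ for each $i$, and Cauchy--Schwarz gives
\[
\sum_{i=1}^r\sigma_i\|\mathcal{Z}_i\|_\ast\leq\sqrt{\sum_{i=1}^r\sigma_i^2}\sqrt{\sum_{i=1}^r\|\mathcal{Z}_i\|_\ast^2}\leq 1\cdot\sqrt{r\cdot\prod_{k=2}^{N-1}I_k}\leq\sqrt{I_1\prod_{k=2}^{N-1}I_k}=\sqrt{\prod_{k=1}^{N-1}I_k},
\]
which closes the induction.

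The one place that deserves explicit verification, and which I view as the main (minor) obstacle, is confirming that Proposition~\ref{prop:svd-nuclear} generalizes cleanly to $N$-tensors; this amounts to observing that the argument used there only needs the mode-$1$ flattening and the subadditivity of $\|\cdot\|_\ast$ on rank-one-in-the-first-factor summands, a property which transfers verbatim when the factor $\mathbf{v}_{i,j}\otimes\mathbf{w}_{i,j}$ is replaced by an arbitrary rank-one $(N-1)$-tensor. All dimension bookkeeping reduces to the fact that reshaping the orthonormal right singular vectors of $A_{(1)}$ preserves the Hilbert--Schmidt norm, which lets the induction hypothesis apply with normalization $1$.
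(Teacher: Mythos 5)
Your proof is correct, but it takes a genuinely different route from the paper's. The paper argues in one shot: it expands $\mathcal A=\sum\mathbf a_{i_1\dots i_{N-1}\cdot}\otimes\mathbf e_{1,i_1}\otimes\dots\otimes\mathbf e_{N-1,i_{N-1}}$ over the standard basis of the first $N-1$ modes, observes that this is already a decomposition into $\prod_{i=1}^{N-1}I_i$ rank-one terms whose weights are the lengths $\|\mathbf a_{i_1\dots i_{N-1}\cdot}\|$, and applies Cauchy--Schwarz once to bound the $\ell^1$ sum of these lengths by $\sqrt{\prod_{i=1}^{N-1}I_i}$ times their $\ell^2$ sum, which equals $\|\mathcal A\|_{\operatorname{HS}}=1$; the characterization \eqref{decomp} then finishes. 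You instead induct on $N$, peeling off one mode at a time via the SVD of $A_{(1)}$, using the $N$-tensor analogue of Proposition~\ref{prop:svd-nuclear} to get $\|\mathcal A\|_\ast\leq\sum_i\sigma_i\|\mathcal Z_i\|_\ast$, and using $\sum_i\sigma_i^2=1$ together with $r\leq I_1$ to close the induction. All the steps check out: the base case $\|A\|_\ast\leq\sqrt{\operatorname{rank}(A)}\,\|A\|_{\operatorname{HS}}$ is standard, the reshaping of the orthonormal $\mathbf z_i$ preserves the Hilbert--Schmidt norm, and the dimension ordering $I_2\leq\dots\leq I_N$ passes correctly to the induction hypothesis. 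What the paper's argument buys is brevity and complete elementarity (no SVD, no induction, a single application of Cauchy--Schwarz). What yours buys is a sharper \emph{a posteriori} handle: at each level the intermediate quantity $\sum_i\sigma_i\|\mathcal Z_i\|_\ast$ can be much smaller than the worst-case $\sqrt{\prod_{i=1}^{N-1}I_i}$, and your argument makes explicit how Lemma~\ref{lem:general} is really the iterated form of Proposition~\ref{prop:svd-nuclear}, which the paper only gestures at when it says the earlier results ``can be generalized.'' The worst-case constants coincide, so neither approach improves the stated bound.
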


\begin{proof}
Let $\{\mathbf e_{i,j} \in\mathbb R^{I_i} : j=1,\dots,I_i\}$ be the standard orthonormal basis of $\mathbb R^{I_i}$ for $i=1,\dots,N-1$. Then,
\[
\mathcal A=\sum_{1\leq i_j\leq I_j,\ 1\leq j\leq N-1}\mathbf a_{i_1\dots i_{N-1}\cdot}\otimes\mathbf e_{1,i_1}\otimes\dots\otimes\mathbf e_{N-1,i_{N-1}},
\]
where $\mathbf a_{i_1\dots i_{N-1}\cdot}$'s are the mode $N$ vectors of $\mathcal A$. 
Since $\{\mathbf e_{1,i_1}\otimes\dots\otimes\mathbf e_{N-1,i_{N-1}} : 1\leq i_j\leq I_j,\ 1\leq j\leq N-1\}$ is the standard orthonormal basis of $\mathbb R^{I_1\times\dots\times I_{N-1}}$, we have that
\begin{align*}
\|\mathcal A\|_{\operatorname{HS}}^2&=\sum_{1\leq i_j\leq I_j,\ 1\leq j\leq N-1}\langle \mathbf a_{i_1\dots i_{N-1}\cdot}\otimes\mathbf e_{1,i_1}\otimes\dots\otimes\mathbf e_{N-1,i_{N-1}}, \mathbf a_{i_1\dots i_{N-1}\cdot}\otimes\mathbf e_{1,i_1}\otimes\dots\otimes\mathbf e_{N-1,i_{N-1}}\rangle\\
&=\sum_{1\leq i_j\leq I_j,\ 1\leq j\leq N-1}\|\mathbf a_{i_1\dots i_{N-1}\cdot}\|^2.
\end{align*}
On the other side,
\[
\sum_{1\leq i_j\leq I_j,\ 1\leq j\leq N-1}\|\mathbf a_{i_1\dots i_{N-1}\cdot}\otimes\mathbf e_{1,i_1}\otimes\dots\otimes\mathbf e_{N-1,i_{N-1}}\|=\sum_{1\leq i_j\leq I_j,\ 1\leq j\leq N-1}\|\mathbf a_{i_1\dots i_{N-1}\cdot}\|.
\]
Since $\|\mathcal A\|_{\operatorname{HS}}=1$, we have that
\[
\sum_{1\leq i_j\leq I_j,\ 1\leq j\leq N-1}\|\mathbf a_{i_1\dots i_{N-1}\cdot}\|\leq \sqrt{\prod_{i=1}^{N-1}I_i}.
\]
The result on $\|\mathcal A\|_\ast$ then follows from \eqref{decomp}. 
\end{proof}

We then have a similar theorem to Theorem~\ref{thm:bound}. 
\begin{theorem}[General Relation]\label{thm:bound-general}
For any positive integers $N\geq 3$, $I_1\leq\dots\leq I_N$ and an $N$-tensor $\mathcal A\in\mathbb R^{I_1\times\dots\times I_N}$, we have
\[
\|A_{(1)}\|_\ast\leq \|\mathcal A\|_\ast\leq \sqrt{\prod_{i=2}^{N-1}I_i}\|A_{(1)}\|_\ast. 
\]
Therefore,
\[
\|\mathcal A\|_{\#}\leq \|\mathcal A\|_\ast\leq \sqrt{\prod_{i=2}^{N-1}I_i}\|\mathcal A\|_{\#}. 
\]
\end{theorem}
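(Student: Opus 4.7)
The plan is to lift the three ingredients of Sections~\ref{sec:flatten}--\ref{sec:bound} — the duality argument of Proposition~\ref{prop:lowerbound}, the SVD-reshape upper bound of Proposition~\ref{prop:svd-nuclear}, and the matrix bound $\|Z\|_\ast\leq\sqrt{\min\{J,K\}}\,\|Z\|_{\operatorname{HS}}$ used in Theorem~\ref{thm:bound} — to arbitrary order $N\geq 3$, with the third of these replaced by Lemma~\ref{lem:general}. The base case $N=3$ is already Theorem~\ref{thm:bound}, so the new content is really only $N\geq 4$, at which point the sub-tensors produced by the reshape step have order $\geq 3$ and Lemma~\ref{lem:general} can be invoked on them directly. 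The first two ingredients transcribe with only cosmetic changes, because both the characterisation \eqref{decomp} and the rank-one duality argument are order-agnostic.

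For the lower bound I would first note that, for each mode $k$, the mode-$k$ flattening sends a unit rank-one $N$-tensor $\mathbf u_1\otimes\cdots\otimes\mathbf u_N$ to the unit rank-one matrix $\mathbf u_k\otimes(\mathbf u_1\otimes\cdots\widehat{\mathbf u_k}\cdots\otimes\mathbf u_N)$, the latter unit-norm by multiplicativity of norms on tensor products. Repeating the inclusion-of-unit-balls-plus-duality argument of Proposition~\ref{prop:lowerbound} verbatim then yields $\|A_{(k)}\|_\ast\leq\|\mathcal A\|_\ast$ for every mode $k$; averaging in $k$ gives $\|\mathcal A\|_{\#}\leq\|\mathcal A\|_\ast$.

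For the upper bound $\|\mathcal A\|_\ast\leq\sqrt{\prod_{i=2}^{N-1}I_i}\,\|A_{(1)}\|_\ast$, take the SVD $A_{(1)}=\sum_{i=1}^r\sigma_i\mathbf x_i\otimes\mathbf z_i$ and reshape each $\mathbf z_i\in\mathbb R^{I_2\cdots I_N}$ (under the fixed lexicographic order) into an $(N-1)$-tensor $\mathcal Z_i\in\mathbb R^{I_2\times\cdots\times I_N}$ with $\|\mathcal Z_i\|_{\operatorname{HS}}=1$. Substituting a near-optimal rank-one decomposition of each $\mathcal Z_i$ into $\mathcal A$ and invoking \eqref{decomp}, exactly as in the proof of Proposition~\ref{prop:svd-nuclear}, gives $\|\mathcal A\|_\ast\leq\sum_{i=1}^r\sigma_i\|\mathcal Z_i\|_\ast$. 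Because the dimensions $I_2\leq\cdots\leq I_N$ of $\mathcal Z_i$ are already sorted, Lemma~\ref{lem:general} applies to $\mathcal Z_i$ and yields $\|\mathcal Z_i\|_\ast\leq\sqrt{\prod_{j=2}^{N-1}I_j}$; factoring this constant out of the sum gives the desired bound.

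For the averaged statement, running the SVD argument at each mode $k$ produces constants $C_k$ equal to the square root of the product of all $I_i$ with $i\neq k$ except the largest among them, and a direct arithmetic comparison using $I_1\leq\cdots\leq I_N$ shows $C_k\leq C_1=\sqrt{\prod_{i=2}^{N-1}I_i}$ for every $k$. Summing the per-mode inequalities $\|\mathcal A\|_\ast\leq C_k\|A_{(k)}\|_\ast$ and replacing each $C_k$ by $C_1$ yields $N\|\mathcal A\|_\ast\leq C_1\sum_k\|A_{(k)}\|_\ast=N\,C_1\,\|\mathcal A\|_{\#}$. The only genuine obstacle in the whole argument is the single inequality $\|\mathcal Z_i\|_\ast\leq\sqrt{\prod_{j=2}^{N-1}I_j}$, which is exactly what Lemma~\ref{lem:general} was constructed to provide; everything else is a faithful transcription of the $N=3$ proof already in the excerpt.
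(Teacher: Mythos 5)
Your proposal is correct and follows exactly the route the paper intends: the paper only sketches this proof, asserting that Propositions~\ref{prop:lowerbound} and \ref{prop:svd-nuclear} generalize verbatim and that Lemma~\ref{lem:general} is the one new ingredient needed to bound the nuclear norms of the reshaped $(N-1)$-tensors $\mathcal Z_i$. You supply the details the paper leaves implicit (including the per-mode constants $C_k\leq C_1$ for the averaged statement), and all of them check out.
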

 
\bibliographystyle{model6-names}

\end{document}